\theoremstyle{plain}
\newtheorem{Thm}{Theorem}[section]
\newtheorem{Lem}[Thm]{Lemma}
\newtheorem{Prop}[Thm]{Proposition}
\theoremstyle{definition}
\newtheorem{Exa}[Thm]{Example}
\newtheorem*{Rem}{Remark}
\newtheorem*{Defi}{Definition}
\title{T-partition systems and travel groupoids on a graph}
\author{
{\sc Jung Rae CHO}
\qquad {\sc Jeongmi PARK}\\ 
[1ex]
{\small Department of Mathematics, 
Pusan National University, 
Busan 609-735, Korea} \\
{\small \texttt{jungcho@pusan.ac.kr}} 
\qquad 
{\small \texttt{jm1015@pusan.ac.kr}} \\
\\
{\sc Yoshio SANO}
\thanks{This work was supported by JSPS KAKENHI Grant Number 15K20885.}\\
[1ex]
{\small Division of Information Engineering}\\
{\small Faculty of Engineering, Information and Systems}\\
{\small University of Tsukuba, 
Ibaraki 305-8573, Japan}\\
{\small \texttt{sano@cs.tsukuba.ac.jp}}
}
\date{}
\begin{document}

\maketitle

%%%%%%%%%%%%%%%%%%%%%%%%%%%%%%%%%%%%%%%%%%%%%%%%%%%%%%%%%%%%%%%%%%%%%%%%%%

\begin{abstract}
The notion of travel groupoids 
was introduced by L. Nebesk{\'y} in 2006 
in connection with a study on geodetic graphs. 
A travel groupoid is a pair of a set $V$ and 
a binary operation $*$ on $V$ 
satisfying two axioms. 
For a travel groupoid, we can associate a graph. 
We say that a graph $G$ has a travel groupoid 
if the graph associated with the travel groupoid 
is equal to $G$. 
Nebesk{\'y} gave a characterization 
for finite graphs to have a travel groupoid. 

In this paper, 
we introduce the notion of T-partition systems on a graph  
and give a characterization of travel groupoids on a graph 
in terms of T-partition systems. 
\end{abstract}

%%%%%%%%%%%%%%%%%%%%%%%%%%%%%%%%%%%%%%%%%%%%%%%%%%%%%%%%%%%%%%%%%%%%%%%%%%

\noindent
\textbf{Keywords:} 
travel groupoid, 
right translation system, 
graph, 
T-partition system. 

%%%%%%%%%%%%%%%%%%%%%%%%%%%%%%%%%%%%%%%%%%%%%%%%%%%%%%%%%%%%%%%%%%%%%%%%%%

\noindent
\textbf{2010 Mathematics Subject Classification:}
20N02, 05C12, 05C05. 

%%%%%%%%%%%%%%%%%%%%%%%%%%%%%%%%%%%%%%%%%%%%%%%%%%%%%%%%%%%%%%%%%%%%%%%%%%
\section{Introduction}
%%%%%%%%%%%%%%%%%%%%%%%%%%%%%%%%%%%%%%%%%%%%%%%%%%%%%%%%%%%%%%%%%%%%%%%%%%

All graphs in this paper 
are (finite or infinite) undirected graphs 
with no loops and no multiple edges. 

A \emph{groupoid} is the pair $(V,*)$ 
of a nonempty set $V$ and a binary operation $*$ on $V$. 
The notion of travel groupoids was introduced 
by L. Nebesk{\'y} \cite{TG2006} in 2006. 
First, let us recall the definition of travel groupoids. 

A \emph{travel groupoid} is a groupoid $(V,*)$ 
satisfying the following axioms (t1) and (t2):
\begin{itemize}
\item[(t1)] 
$(u * v) * u = u$ (for all $u,v \in V$), 
\item[(t2)] 
if $(u * v) * v = u$, then $u = v$ (for all $u,v \in V$).
\end{itemize}
A travel groupoid is said to be \emph{simple} 
if the following condition holds. 
\begin{itemize}
\item[{\rm (t3)}]
If $v*u \neq u$, then $u*(v*u)=u*v$ (for any $u,v \in V$). 
\end{itemize}
A travel groupoid is said to be \emph{smooth} 
if the following condition holds. 
\begin{itemize}
\item[{\rm (t4)}]
If $u*v=u*w$, then $u*(v*w)=u*v$ (for any $u,v,w \in V$). 
\end{itemize}
A travel groupoid is said to be \emph{semi-smooth} 
if the following condition holds. 
\begin{itemize}
\item[{\rm (t5)}]
If $u*v=u*w$, then $u*(v*w)=u*v$ or $u*((v*w)*w)=u*v$ 
(for any $u,v,w \in V$). 
\end{itemize}

Let $(V,*)$ be a travel groupoid, 
and let $G$ be a graph. 
We say that \emph{$(V,*)$ is on $G$}
or that \emph{$G$ has $(V,*)$} if $V(G) = V$ and
$E(G) = \{\{u,v \} \mid u, v \in V, u \neq v, 
\text{ and } u * v = v \}$. 
Note that every travel groupoid is on exactly one graph.

In this paper, we introduce the notion of 
T-partition systems on a graph  
and give a characterization of travel groupoids 
on a graph. 
This paper is organized as follows: 
In Section 2, we define the right translation system of a groupoid 
and characterize travel groupoids 
in terms of the right translation systems of groupoids. 
Section 3 is the main part of this paper. 
We introduce T-partition systems on a graph  
and give a characterization of travel groupoids on a graph 
in terms of T-partition systems. 
In Section 4, we consider 
simple, smooth, and semi-smooth travel groupoids, 
and give characterizations for them.

%%%%%%%%%%%%%%%%%%%%%%%%%%%%%%%%%%%%%%%%%%%%%%%%%%%%%%%%%%%%%%%%%%%%%%%%%%
\section{The right translation system of a travel groupoid}
%%%%%%%%%%%%%%%%%%%%%%%%%%%%%%%%%%%%%%%%%%%%%%%%%%%%%%%%%%%%%%%%%%%%%%%%%%

\begin{Defi}
Let $(V,*)$ be a groupoid. 
For $u,v \in V$, let 
$V^R_{u,v}$ be 
the set of elements whose right translations send the element $u$ 
to the element $v$, i.e., 
\[
V_{u,v} = V^R_{u,v} := \{ w \in V \mid u * w = v \}. 
\]
Then we call the system $(V_{u,v} \mid (u,v) \in V \times V)$
the \emph{right translation system} of the groupoid $(V,*)$. 
\qed
\end{Defi}

\begin{Rem}
For each element $w$ in a groupoid $(V,*)$, 
the \emph{right translation map} $R_w:V \to V$  
is defined by $R_w(u):=u*w$ for $u \in V$. 
Then the set $V^R_{u,v}$ is given by $V^R_{u,v} = \{ w \in V \mid R_w(u)=v \}$. 

We can also consider the set $V^R_{u,v}$ defined above 
as the inverse image of $\{v\}$ 
through the left translation map $L_u:V \to V$ of the groupoid $(V,*)$ 
defined by $L_u(w):=u*w$ for $w \in V$, i.e., 
$V^R_{u,v} = L_u^{-1}(\{v\})$. 
\end{Rem}

\begin{Lem}\label{lem:s1}
Let $(V,*)$ be a groupoid. 
Then, 
$(V,*)$ satisfies the condition {\rm (t1)}  
if and only if 
the right translation system $(V_{u,v} \mid (u,v) \in V \times V)$ 
of $(V,*)$ 
satisfies the following property: 
\begin{itemize}
\item[{\rm (R1)}]
For any $u, v \in V$, 
if $V_{u,v} \neq \emptyset$, then $u \in V_{v,u}$. 
\end{itemize}
\end{Lem}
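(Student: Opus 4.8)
The plan is to unwind both (t1) and (R1) into statements purely about the operation $*$, observe that they coincide, and then establish the two implications by direct substitution. The crucial dictionary is supplied by the definition of the right translation system: the membership $w \in V_{u,v}$ is exactly the equation $u * w = v$. Hence $V_{u,v} \neq \emptyset$ means ``there exists $w \in V$ with $u * w = v$'', while $u \in V_{v,u}$ means precisely $v * u = u$. Under this reading, (R1) says that whenever some $w$ satisfies $u * w = v$, we have $v * u = u$, and (t1) says that $(u*v)*u = u$ for all $u,v$.

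For the forward direction I would assume (t1) and suppose $V_{u,v} \neq \emptyset$ for some $u,v \in V$. Choosing any $w \in V_{u,v}$ gives $u * w = v$. Substituting $v = u * w$ into the instance of (t1) for the pair $(u,w)$ yields $(u * w) * u = u$, that is, $v * u = u$; this is exactly the assertion $u \in V_{v,u}$, so (R1) holds.

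For the converse I would assume (R1) and fix an arbitrary pair $u,v \in V$. Setting $v' := u * v$, the element $v$ witnesses $v \in V_{u,v'}$, so $V_{u,v'} \neq \emptyset$. Applying (R1) to the pair $(u,v')$ then gives $u \in V_{v',u}$, i.e. $v' * u = u$, which upon resubstituting $v' = u * v$ is precisely $(u * v) * u = u$. Since $u,v$ were arbitrary, (t1) follows.

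The argument is entirely a matter of translating between the equational form of the axiom and the set-theoretic form of the property, so I do not expect a genuine obstacle; the only point requiring care is choosing the right instantiation in each direction. A nonempty fibre $V_{u,v}$ supplies the witness $w$ with $u*w=v$ needed for the forward implication, while the element $v' = u*v$ is the natural choice that makes the fibre $V_{u,v'}$ visibly nonempty in the converse.
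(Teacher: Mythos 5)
Your proof is correct and follows essentially the same route as the paper: in the forward direction a witness $w \in V_{u,v}$ gives $u*w=v$ and (t1) yields $v*u=u$; in the converse the element $v'=u*v$ makes $V_{u,v'}$ nonempty so that (R1) delivers $(u*v)*u=u$. The paper phrases the converse via ``the unique $x$ with $v \in V_{u,x}$'' rather than defining $v'=u*v$ outright, but this is the same instantiation.
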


\begin{proof}
Let $(V,*)$ be a groupoid satisfying the condition {\rm (t1)}. 
Take any $u,v \in V$ such that $V_{u,v} \neq \emptyset$. 
Take an element $x \in V_{u,v}$. 
Then $u*x=v$. 
By (t1), we have $(u*x)*u=u$. 
Therefore, we have $v*u=u$ and so $u \in V_{v,u}$. 
Thus the property (R1) holds. 

Suppose that 
the right translation system $(V_{u,v} \mid (u,v) \in V \times V)$ 
of a groupoid $(V,*)$ satisfies the property (R1). 
Take any $u,v \in V$. 
Then there exists a unique element $x \in V$ such that $v \in V_{u,x}$. 
Since $V_{u,x} \neq \emptyset$,
it follows from the property (R1) that $u \in V_{x,u}$ and so $x*u=u$. 
Since $v \in V_{u,x}$, we have $u*v=x$.
Therefore, we have $(u*v)*u=u$. 
Thus $(V,*)$ satisfies the condition (t1). 
\end{proof}

\begin{Lem}\label{lem:s2}
Let $(V,*)$ be a groupoid. 
Then, 
$(V,*)$ satisfies the condition {\rm (t2)} 
if and only if 
the right translation system $(V_{u,v} \mid (u,v) \in V \times V)$ 
of $(V,*)$ 
satisfies the following property: 
\begin{itemize}
\item[{\rm (R2)}]
For any $u, v \in V$ with $u \neq v$, 
$V_{u,v} \cap V_{v,u} = \emptyset$. 
\end{itemize}
\end{Lem}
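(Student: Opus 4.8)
The plan is to prove the two implications separately, in each case simply rewriting the membership statement $w\in V_{a,b}$ as the product equation $a*w=b$ (and conversely), so that (R2) turns into an assertion about products that can be matched directly against (t2).

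For the implication (t2)$\Rightarrow$(R2) I would argue by contradiction. Suppose $u\neq v$ and that some $w$ lies in $V_{u,v}\cap V_{v,u}$; unwinding the definitions, this says $u*w=v$ and $v*w=u$ simultaneously. Then $(u*w)*w=v*w=u$, so (t2) applied to the pair $(u,w)$ gives $u=w$, and symmetrically $(v*w)*w=u*w=v$, so (t2) applied to $(v,w)$ gives $v=w$. Hence $u=w=v$, contradicting $u\neq v$, and therefore $V_{u,v}\cap V_{v,u}=\emptyset$ for all $u\neq v$, which is exactly (R2). This direction should be routine; the only thing to spot is the two symmetric applications of (t2).

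For the converse (R2)$\Rightarrow$(t2) I would start from an arbitrary pair $u,v$ with $(u*v)*v=u$ and put $x:=u*v$. The equation $u*v=x$ records $v\in V_{u,x}$, while $(u*v)*v=x*v=u$ records $v\in V_{x,u}$; thus $v\in V_{u,x}\cap V_{x,u}$, so this intersection is nonempty, and the contrapositive of (R2) forces $u=x$, that is, $u*v=u$. The hard part, which I expect to be the main obstacle, is the final step: I still have to pass from $u*v=u$ to the desired conclusion $u=v$. The intersection condition (R2), stated only for distinct pairs, says nothing about the fibre $V_{u,u}$, so on its own it does not exclude the degenerate possibility $u*v=u$ with $u\neq v$. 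I therefore anticipate that closing this case needs a separate ingredient forcing $V_{u,u}\subseteq\{u\}$ (equivalently, $u*w=u\Rightarrow w=u$), and identifying precisely which feature of the right translation system supplies this — or splitting the verification into the cases $u*v\neq u$ (handled immediately by (R2)) and $u*v=u$ — is the crux I would concentrate on.
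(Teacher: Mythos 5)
Your forward direction is complete and correct; in fact it does slightly more than the paper's own proof, which derives $(u*x)*x=u$ and simply declares ``a contradiction to (t2)''. Strictly speaking, (t2) applied to that equation only yields $u=x$, and one needs your second, symmetric application of (t2) to the pair $(v,x)$ to get $v=x$ as well, hence $u=v$, contradicting $u\neq v$. So that half of your argument matches the paper's and closes a small step the paper leaves implicit.

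For the converse you follow exactly the paper's route (set $x:=u*v$ and observe $v\in V_{u,x}\cap V_{x,u}$), and the obstacle you isolate is genuine: (R2) produces a contradiction only when $u\neq x$, i.e.\ only when $u*v\neq u$, and neither your argument nor the paper's rules out the degenerate case $u*v=u$ with $u\neq v$ --- the paper's proof asserts the contradiction with (R2) without checking $u\neq x$. Moreover, the extra ingredient you are looking for cannot be extracted from (R2) alone: the left-projection groupoid $u*v:=u$ on a two-element set has $V_{u,v}=\emptyset$ whenever $u\neq v$, so it satisfies (R2) vacuously, yet $(u*v)*v=u*v=u$ for all $u,v$, so (t2) fails. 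Hence the ``if'' direction of the lemma is false for arbitrary groupoids as stated (and, as a consequence, so is the ``if'' direction of Theorem~\ref{thm:main0}). The statement becomes correct, and your argument closes, once one adds the hypothesis $V_{u,u}=\{u\}$ for every $u$ --- equivalently, $u*w=u\Rightarrow w=u$ --- which is condition (P1a) in the T-partition setting and holds automatically in any travel groupoid: then $x=u$ would give $v\in V_{u,u}=\{u\}$, i.e.\ $v=u$, disposing of the bad case. In short, you have not missed an idea; you have located a gap in the paper's own proof.
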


\begin{proof} 
Let $(V,*)$ be a groupoid satisfying the condition {\rm (t2)}. 
Suppose that $V_{u,v} \cap V_{v,u} \neq \emptyset$ 
for some $u,v \in V$ with $u \neq v$. 
Take an element $x \in V_{u,v} \cap V_{v,u}$. 
Then $u*x=v$ and $v*x=u$. 
Therefore, we have $(u*x)*x=u$, 
which is a contradiction to the property (t2). 
Thus the property (R2) holds. 

Suppose that 
the right translation system $(V_{u,v} \mid (u,v) \in V \times V)$ 
of a groupoid $(V,*)$ satisfies the property (R2). 
Suppose that 
there exist $u,v \in V$ with $u \neq v$ 
such that $(u*v)*v=u$. 
Let $x:=u*v$. Then $v \in V_{u,x}$. 
Since $(u*v)*v=u$, we have $x * v = u$ and so $v \in V_{x,u}$. 
Therefore $v \in V_{u,x} \cap V_{x,u}$, 
which is a contradiction to the property (R2). 
Thus $(V,*)$ satisfies the condition (t2). 
\end{proof}

The following gives a characterization of travel groupoids 
in terms of the right translation systems of groupoids. 

\begin{Thm}\label{thm:main0}
Let $(V,*)$ be a groupoid. 
Then, 
$(V,*)$ is a travel groupoid 
if and only if 
the right translation system of $(V,*)$ 
satisfies the properties {\rm (R1)} and {\rm (R2)}. 
\end{Thm}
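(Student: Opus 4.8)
The plan is to deduce this directly from the two lemmas already established, since the statement is essentially the conjunction of the equivalences proved in Lemma \ref{lem:s1} and Lemma \ref{lem:s2}.

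First I would recall that, by definition, a groupoid $(V,*)$ is a travel groupoid precisely when it satisfies both axioms (t1) and (t2). Thus the left-hand side of the asserted equivalence, namely ``$(V,*)$ is a travel groupoid'', unfolds into the conjunction ``$(V,*)$ satisfies (t1)'' and ``$(V,*)$ satisfies (t2)''.

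Next I would invoke Lemma \ref{lem:s1}, which gives that $(V,*)$ satisfies (t1) if and only if its right translation system satisfies (R1), together with Lemma \ref{lem:s2}, which gives that $(V,*)$ satisfies (t2) if and only if its right translation system satisfies (R2). Conjoining these two biconditionals, the pair of axioms ``(t1) and (t2)'' holds if and only if the pair of properties ``(R1) and (R2)'' holds for the right translation system of $(V,*)$. This is exactly the claimed equivalence.

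The only point worth checking—rather than a genuine obstacle—is that both lemmas refer to the same object, namely the right translation system $(V_{u,v} \mid (u,v) \in V \times V)$ of the single groupoid $(V,*)$, so the two equivalences may be combined without any compatibility condition or interaction between (R1) and (R2). Since (t1) and (t2) are independent axioms, each handled by its own lemma, no further argument is required and the theorem follows at once.
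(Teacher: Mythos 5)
Your proposal is correct and takes exactly the same route as the paper, which also derives the theorem by combining Lemmas \ref{lem:s1} and \ref{lem:s2}; you merely spell out the conjunction of the two biconditionals that the paper leaves implicit.
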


\begin{proof}
The theorem follows from Lemmas \ref{lem:s1} and \ref{lem:s2}.
\end{proof}

%%%%%%%%%%%%%%%%%%%%%%%%%%%%%%%%%%%%%%%%%%%%%%%%%%%%%%%%%%%%%%%%%%%%%%%%%%
\section{T-partition systems on a graph}
%%%%%%%%%%%%%%%%%%%%%%%%%%%%%%%%%%%%%%%%%%%%%%%%%%%%%%%%%%%%%%%%%%%%%%%%%%

We introduce the notion of T-partition systems on a graph 
to characterize travel groupoids on a graph. 

For a vertex $u$ in a graph $G$, 
let $N_G[u]$ denote the closed neighborhood of $u$ in $G$, i.e., 
$N_G[u] := \{u\} \cup \{v \in V \mid \{u,v\}\in E \}$. 

\begin{Defi}
Let $G=(V,E)$ be a graph. 
A \emph{T-partition system} on $G$ 
is a system 
$$\mathcal{P} = (V_{u,v} \subseteq V \mid (u,v) \in V \times V)$$ 
satisfying the following conditions: 
\begin{itemize}
\item[{\rm (P0)}] 
$\mathcal{P}_u := \{V_{u,v} \mid v \in N_G[u] \}$ is a partition of $V$ 
\quad 
(for any $u \in V$); 
\item[{\rm (P1a)}]
$V_{u,u} = \{u\}$ 
\quad 
(for any $u \in V$); 
\item[{\rm (P1b)}]
$v \in V_{u,v}$ 
$\iff$ 
$\{u, v\} \in E$ 
\quad
(for any $u, v \in V$ with $u \neq v$); 
\item[{\rm (P1c)}]
$V_{u,v} = \emptyset$ 
$\iff$  
$\{u, v\} \not\in E$ 
\quad
(for any $u, v \in V$ with $u \neq v$); 
\item[{\rm (P2)}]
$V_{u,v} \cap V_{v,u} = \emptyset$ 
\quad
(for any $u, v \in V$ with $u \neq v$). 
\qed
\end{itemize}
\end{Defi}

\begin{Rem}
Let $\mathcal{P}=(V_{u,v} \mid (u,v) \in V \times V)$
be a T-partition system on a graph $G$. 
It follows from the conditions (P0) and (P1c) that,
for any $u, v \in V$, 
there exists the unique vertex $w$ such that $v \in V_{u,w}$. 
\end{Rem}

\begin{Defi}
Let $\mathcal{P}=(V_{u,v} \mid (u,v) \in V \times V)$
be a T-partition system on a graph $G$. 
For $u,v \in V$, 
let $f_u(v)$ be the unique vertex $w$ such that $v \in V_{u,w}$. 
We define a binary operation $*$ on $V$ by 
$u*v:=f_u(v)$. 
We call $(V,*)$ the \emph{groupoid associated with} $\mathcal{P}$. 
\qed
\end{Defi}

\begin{Rem}
It follows from definitions that the right translation system of the
groupoid associated with a T-partition system $\mathcal{P}$ on a graph $G$ 
is the same as $\mathcal{P}$. 
\end{Rem}

\begin{Lem}\label{lem:VPS2TG}
Let $G$ be a graph, and 
let $\mathcal{P}$
be a T-partition system on $G$. 
Then, the groupoid associated with $\mathcal{P}$ 
is a travel groupoid on $G$. 
\end{Lem}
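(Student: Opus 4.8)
The plan is to reduce everything to Theorem \ref{thm:main0} by exploiting the fact that the right translation system of the associated groupoid is literally $\mathcal{P}$ again. Let $(V,*)$ be the groupoid associated with $\mathcal{P}$, so that $u*v = f_u(v)$ is the unique vertex $w$ with $v \in V_{u,w}$ (well-definedness of $f_u$ being guaranteed by (P0) and (P1c), as observed earlier). From this definition, $u*w = v$ holds precisely when $w \in V_{u,v}$, and hence the right translation system $(V^R_{u,v})$ of $(V,*)$ satisfies $V^R_{u,v} = V_{u,v}$ for all $u,v$. Consequently, by Theorem \ref{thm:main0}, to see that $(V,*)$ is a travel groupoid it suffices to check that $\mathcal{P}$ enjoys properties (R1) and (R2); and then, separately, that the graph carrying $(V,*)$ is exactly $G$.

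Property (R2) is identical to condition (P2), so it requires no argument. For (R1), I would proceed by cases. Fix $u,v \in V$ with $V_{u,v} \neq \emptyset$ and aim to show $u \in V_{v,u}$. If $u = v$, then $V_{u,u} = \{u\}$ by (P1a), so $u \in V_{u,u}$ gives the claim. If $u \neq v$, then $V_{u,v} \neq \emptyset$ forces $\{u,v\} \in E$ by (P1c); since $\{v,u\} = \{u,v\} \in E$, condition (P1b) applied to the ordered pair $(v,u)$ yields $u \in V_{v,u}$. Thus (R1) holds, and Theorem \ref{thm:main0} certifies that $(V,*)$ is a travel groupoid.

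It remains to identify the graph on which $(V,*)$ lives. Here I would use that, directly from the definition of $f_u$, the equality $u*v = v$ is equivalent to $v \in V_{u,v}$. For distinct $u,v$, condition (P1b) says $v \in V_{u,v}$ iff $\{u,v\} \in E$, whence $\{\,\{u,v\} : u \neq v,\ u*v = v\,\} = E(G)$; combined with $V(G) = V$ this is exactly the assertion that $(V,*)$ is on $G$. Every computation here is short, so there is no genuine obstacle; the only point not to overlook is the degenerate case $u = v$ in the verification of (R1), which is precisely where (P1a) rather than (P1b)/(P1c) is invoked. The conceptual heart of the proof is the remark that forming the associated groupoid and then reading off its right translation system returns $\mathcal{P}$ unchanged, which lets Theorem \ref{thm:main0} carry out all the substantive work.
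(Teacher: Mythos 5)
Your proof is correct and follows essentially the same route as the paper: both reduce the travel-groupoid axioms to properties (R1) and (R2) via Theorem \ref{thm:main0}, using the observation that the right translation system of the associated groupoid is $\mathcal{P}$ itself, and then verify directly that the underlying graph is $G$. Your version merely spells out the case analysis for (R1) (which the paper leaves implicit), so no further changes are needed.
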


\begin{proof}
It follows from 
the properties (P1a), (P1b), and (P1c) in 
the definition 
a T-partition system on a graph 
that a T-partition system 
$\mathcal{P}=(V_{u,v} \mid (u,v) \in V \times V)$ satisfies 
the property (R1). 
Moreover, the condition (P2) is the same as the property (R2). 
By Theorem \ref{thm:main0}, 
$(V,*)$ is a travel groupoid on $G$. 

Now we show that $(V,*)$ is on the graph $G$. 
Let $G_{(V,*)}$ be the graph which has $(V,*)$. 
We show that $G_{(V,*)}=G$. 
Take any edge $\{u,v\}$ in $G_{(V,*)}$. 
Then, we have $u*v=v$. Therefore $v \in V_{u,v}$. 
Thus $\{u,v\}$ is an edge in $G$, and so $E(G_{(V,*)}) \subseteq E(G)$. 
Take any edge $\{u,v\}$ in $G$. 
Then, we have $v \in V_{u,v}$. 
Therefore $u*v=v$. Thus $\{u,v\}$ is an edge in $G_{(V,*)}$, 
and so $E(G) \subseteq E(G_{(V,*)})$. 
Hence we have $G_{(V,*)}=G$. 
\end{proof}

\begin{Lem}\label{lem:TG2VPS}
Let $G$ be a graph, and 
let $(V,*)$ be a travel groupoid on $G$. 
Then, the right translation system 
of $(V,*)$ 
is a T-partition system on $G$. 
\end{Lem}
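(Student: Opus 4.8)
The plan is to verify that the right translation system $\mathcal{P} = (V_{u,v} \mid (u,v) \in V \times V)$ of $(V,*)$ satisfies the five conditions (P0)--(P2) defining a T-partition system on $G$. Since $(V,*)$ is a travel groupoid, Theorem \ref{thm:main0} tells us that $\mathcal{P}$ already satisfies the properties (R1) and (R2). Condition (P2) is literally (R2), so it holds at once, and the real work is to check (P1a), (P1b), (P1c), and (P0).

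I would dispatch the edge-detecting conditions first. For (P1b), note that for $u \neq v$ we have $v \in V_{u,v} \iff u*v = v \iff \{u,v\} \in E(G)$, where the last equivalence is just the definition of $(V,*)$ being on $G$; so (P1b) is immediate. For (P1c), the direction $\{u,v\} \in E \Rightarrow V_{u,v} \neq \emptyset$ follows since then $v \in V_{u,v}$. For the converse I would use (R1) twice: if $V_{u,v} \neq \emptyset$, then (R1) gives $u \in V_{v,u}$, whence $V_{v,u} \neq \emptyset$, and applying (R1) again yields $v \in V_{u,v}$, i.e. $u*v = v$, so $\{u,v\} \in E$. Taking contrapositives gives the biconditional (P1c).

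Next I would establish (P1a). First I would prove the idempotent law $u*u = u$: putting $v = u$ in (t1) gives $(u*u)*u = u$, and writing $a := u*u$ this reads $a*u = u$, hence $(a*u)*u = u*u = a$; now (t2) forces $a = u$. This shows $u \in V_{u,u}$. Conversely, if $w \in V_{u,u}$ then $u*w = u$, so $(u*w)*w = u*w = u$, and (t2) yields $w = u$; hence $V_{u,u} = \{u\}$.

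Finally I would verify the partition condition (P0) for a fixed $u$. The sets $V_{u,v}$ with $v \in N_G[u]$ are nonempty by (P1a) (for $v = u$) and by (P1c) (for $v$ adjacent to $u$); they are pairwise disjoint because each $w$ lies in exactly one class, namely $V_{u,\,u*w}$, as $u*w$ is a single well-defined element. The only substantive point---and the step I expect to be the main obstacle---is that these classes cover $V$: given an arbitrary $w \in V$, set $x := u*w$, so that $w \in V_{u,x}$ and in particular $V_{u,x} \neq \emptyset$; I must then argue $x \in N_G[u]$. If $x = u$ this is clear, and if $x \neq u$ then $V_{u,x} \neq \emptyset$ forces $\{u,x\} \in E$ by (P1c), so again $x \in N_G[u]$. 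Thus $w$ lies in a class indexed by $N_G[u]$, the union is all of $V$, and $\mathcal{P}_u$ is a partition. This completes the verification that $\mathcal{P}$ is a T-partition system on $G$.
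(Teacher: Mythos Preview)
Your proof is correct and follows essentially the same strategy as the paper, namely verifying each of (P0)--(P2) directly from the travel-groupoid axioms and the definition of ``on $G$''. The only notable differences are cosmetic: the paper cites Nebesk\'y for the fact $u*v=u\iff u=v$ (your (P1a)) whereas you derive it from (t1) and (t2), and the paper obtains (P1c) and the covering in (P0) from the single observation that $u*w\in N_G[u]$ (a direct consequence of (t1)), while you route both through your double application of (R1).
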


\begin{proof}
Let $(V_{u,v} \mid (u,v) \in V \times V)$ be 
the right translation system 
of a travel groupoid $(V,*)$ on a graph $G$. 

Fix any $u \in V$. 
Since $u*w$ is in $N_G[u]$ for any $w \in V$, 
we have $\bigcup_{v \in N_G[u]} V_{u,v} = V$. 
Suppose that $V_{u,x} \cap V_{u,y} \neq \emptyset$
for some $x, y \in N_G[u]$ with $x \neq y$. 
Take $z \in V_{u,x} \cap V_{u,y}$. 
Then $u*z=x$ and $u*z=y$. 
Therefore we have $x=y$, which is a contradiction to 
the assumption $x \neq y$. 
Thus $V_{u,x} \cap V_{u,y} = \emptyset$
for any $x, y \in N_G[u]$ with $x \neq y$. 
Therefore 
$\{ V_{u,v} \mid v \in N_G[u] \}$ is a partition of $V$. 
Thus the condition (P0) holds. 

By \cite[Proposition 2 (2)]{TG2006}, 
$u*v=u$ if and only if $u=v$. 
Therefore $V_{u,u} = \{u\}$ and thus the condition (P1a) holds. 

If $u$ and $v$ are adjacent in $G$, then we have $u*v=v$, 
and so $v \in V_{u,v}$. 
If $v \in V_{u,v}$, then then we have $u*v=v$, 
and so $\{u, v\}$ is an edge of $G$. 
Thus the condition (P1b) holds.  

Since $u * w$ is a neighbor of $u$ in $G$ for any $w \in V \setminus \{u\}$, 
if $u$ and $v$ are not adjacent in $G$, 
then $V_{u,v} = \emptyset$. 
If $V_{u,v} = \emptyset$, then we have $u*v \neq v$, 
and so $u$ and $v$ are not adjacent in $G$. 
Thus the condition (P1c) holds.

Since $(V,*)$ satisfies the condition (t2), 
it follows from Lemma \ref{lem:s2} that 
the condition (P2) holds. 

Hence the right translation system of $(V,*)$ 
is a T-partition system on $G$. 
\end{proof}

The following gives a characterization of travel groupoids on a graph 
in terms of T-partition systems on the graph. 

\begin{Thm}\label{thm:main1}
Let $G$ be a graph. 
Then, there exists a one-to-one correspondence between 
the set of all travel groupoids on $G$ 
and the set of all T-partition systems on $G$.  
\end{Thm}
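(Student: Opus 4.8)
The plan is to establish the one-to-one correspondence by exhibiting two maps that are mutually inverse. In one direction, Lemma~\ref{lem:TG2VPS} sends a travel groupoid $(V,*)$ on $G$ to its right translation system, which it guarantees is a T-partition system on $G$; call this map $\Phi$. In the other direction, the construction in the definition of the associated groupoid, together with Lemma~\ref{lem:VPS2TG}, sends a T-partition system $\mathcal{P}$ on $G$ to the groupoid associated with $\mathcal{P}$, which is a travel groupoid on $G$; call this map $\Psi$. So both maps are already known to be well-defined and to land in the correct sets. What remains is to check that $\Psi \circ \Phi = \mathrm{id}$ and $\Phi \circ \Psi = \mathrm{id}$.

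First I would verify $\Phi \circ \Psi = \mathrm{id}$ on T-partition systems. Given a T-partition system $\mathcal{P} = (V_{u,v})$, the Remark immediately following the definition of the associated groupoid states that the right translation system of the groupoid associated with $\mathcal{P}$ is the same as $\mathcal{P}$. Since $\Psi(\mathcal{P})$ is that associated groupoid and $\Phi$ takes the right translation system, this Remark is exactly the statement $\Phi(\Psi(\mathcal{P})) = \mathcal{P}$, so this direction is essentially immediate.

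Next I would verify $\Psi \circ \Phi = \mathrm{id}$ on travel groupoids. Starting from a travel groupoid $(V,*)$, the map $\Phi$ produces its right translation system $(V_{u,v})$, where $V_{u,v} = \{w \mid u*w = v\}$. Applying $\Psi$ yields the groupoid associated with this system, whose operation $\star$ is defined by $u \star v := f_u(v)$, the unique $w$ with $v \in V_{u,w}$. So I must check $u \star v = u * v$ for all $u,v$. By definition of the right translation system, $v \in V_{u,w}$ means $u * v = w$; hence the unique such $w$ is precisely $u*v$, giving $u \star v = u * v$. The only subtle point is that $f_u(v)$ is genuinely well-defined, i.e.\ that there is exactly one $w$ with $v \in V_{u,w}$; but this is guaranteed by conditions (P0) and (P1c) via the Remark after the T-partition system definition, and here $w = u*v$ exhibits existence directly while (P0) supplies uniqueness.

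The main obstacle, if any, is bookkeeping rather than mathematics: I must be careful that the two maps $\Phi$ and $\Psi$ are exactly the constructions named in Lemmas~\ref{lem:TG2VPS} and~\ref{lem:VPS2TG} and that the two Remarks are invoked with the correct objects, so that the composition statements reduce to those Remarks and the trivial identity $u \star v = u * v$. Once $\Phi$ and $\Psi$ are confirmed well-defined by the two lemmas and shown mutually inverse by the two Remarks and this direct computation, the bijection between travel groupoids on $G$ and T-partition systems on $G$ follows, completing the proof.
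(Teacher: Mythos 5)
Your proposal is correct and follows essentially the same approach as the paper: both define the two maps via Lemmas~\ref{lem:VPS2TG} and~\ref{lem:TG2VPS} and then check they are mutually inverse (the paper leaves that check as a one-line ``we can check,'' whereas you spell it out via the Remark and the direct computation $u\star v = u*v$). The only cosmetic difference is that your labels $\Phi$ and $\Psi$ are swapped relative to the paper's.
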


\begin{proof}
Let $V:=V(G)$. 
Let $\mathrm{TG}(G)$ denote the set of all travel groupoids on $G$ 
and let $\mathrm{TPS}(G)$ 
denote the set of all T-partition systems on $G$. 

We define a map $\Phi:\mathrm{TPS}(G) \to \mathrm{TG}(G)$ as follows: 
For 
$\mathcal{P} %= (V_{u,v} \subseteq V \mid (u,v) \in V \times V) 
\in \mathrm{TPS}(G)$, 
let $\Phi(\mathcal{P})$ be the groupoid %$(V,*)$ 
associated with $\mathcal{P}$. 
By Lemma \ref{lem:VPS2TG}, 
$\Phi(\mathcal{P})$ 
is a travel groupoid on $G$. 

We define a map $\Psi:\mathrm{TG}(G) \to \mathrm{TPS}(G)$ as follows: 
For 
$(V,*) \in \mathrm{TG}(G)$, 
let $\Psi((V,*))$ be the right translation system of $(V,*)$. 
By Lemma \ref{lem:TG2VPS}, 
$\Psi((V,*))$ is a T-partition system on $G$. 

Then, we can check that	$\Psi(\Phi(\mathcal{P})) = \mathcal{P}$ holds for any 
$\mathcal{P} \in \mathrm{TPS}(G)$
and that $\Phi(\Psi((V,*))) = (V,*)$ holds for any $(V,*) \in\mathrm{TG}(G)$. 
Hence the map $\Phi$ is a one-to-one correspondence between 
the sets $\mathrm{TPS}(G)$ and $\mathrm{TG}(G)$.
\end{proof}

\begin{Exa}\label{ex:TGon4cycle}
Let $G=(V,E)$ be the graph defined by 
$V=\{a,b,c,d\}$ and 
$E=\{\{a, b\},\{b, c\},\{c, d\},\{a, d\} \}$. 
Let $(V,*)$ be the groupoid defined by 
\[
\begin{array}{llll}
a * a = a, &  a * b = b, &  a * c = d, &  a * d = d, \\
b * a = a, &  b * b = b, &  b * c = c, &  b * d = a, \\
c * a = b, &  c * b = b, &  c * c = c, &  c * d = d, \\
d * a = a, &  d * b = c, &  d * c = c, &  d * d = d. \\
\end{array}
\]
Then $(V,*)$ is a travel groupoid on the graph $G$. 

Let $\mathcal{P}=(V_{u,v} \mid (u,v) \in V \times V)$
be the system of vertex subsets defined by 
\[
\begin{array}{llll}
V_{a,a} = \{a\}, &  V_{a,b} = \{b\}, &  V_{a,c} = \emptyset, &  V_{a,d} = \{c,d\}, \\
V_{b,a} = \{a,d\}, &  V_{b,b} = \{b\}, &  V_{b,c} = \{c\}, &  V_{b,d} = \emptyset, \\
V_{c,a} = \emptyset, &  V_{c,b} = \{a,b\}, &  V_{c,c} = \{c\}, &  V_{c,d} = \{d\}, \\
V_{d,a} = \{a\}, &  V_{d,b} = \emptyset, &  V_{d,c} = \{b,c\}, &  V_{d,d} = \{d\}. \\
\end{array}
\]
Then $\mathcal{P}$ is a T-partition system on $G$. 

Now we can see that the right translation system of $(V,*)$ is $\mathcal{P}$ 
and that the groupoid associated with $\mathcal{P}$ is $(V,*)$. 
\qed
\end{Exa}

%%%%%%%%%%%%%%%%%%%%%%%%%%%%%%%%%%%%%%%%%%%%%%%%%%%%%%%%%%%%%%%%%%%%%%%%%%
\section{Simple, smooth, and semi-smooth systems}
%%%%%%%%%%%%%%%%%%%%%%%%%%%%%%%%%%%%%%%%%%%%%%%%%%%%%%%%%%%%%%%%%%%%%%%%%%

In this section, we give characterizations 
of simple, smooth, and semi-smooth travel groupoids on a graph 
in terms of T-partition systems on the graph. 

%%%%%%%%%%%%%%%%%%%%%%%%%%%%%%%%%%%%%%%%%%%%%%%%%%%%%%%%%%%%%%%%%%%%%%%%%%
\subsection{Simple T-partition systems}
%%%%%%%%%%%%%%%%%%%%%%%%%%%%%%%%%%%%%%%%%%%%%%%%%%%%%%%%%%%%%%%%%%%%%%%%%%

\begin{Prop}\label{prop:simple-chara}
Let $(V,*)$ be a travel groupoid. 
For $u,v \in V$, let 
\[
V_{u,v} := \{ w \in V \mid u * w = v \}. 
\]
Then, the following conditions are equivalent: 
\begin{itemize}
\item[{\rm (a)}]
$(V,*)$ is simple; 
\item[{\rm (b)}]
For $u,v,x,y \in V$, if  
$u \in V_{v,x}$, $v \in V_{u,y}$, $u \neq x$, and $v \neq y$, 
then
$x \in V_{u,y}$ and $y \in V_{v,x}$. 
\end{itemize}
\end{Prop}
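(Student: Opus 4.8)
The plan is to first rewrite condition (b) in terms of the operation $*$ so that it can be compared directly with the defining axiom (t3) of simplicity. Unwinding the definition of $V_{u,v}$, the hypotheses of (b) read $v*u = x$, $u*v = y$, $v*u \neq u$, and $u*v \neq v$, while its conclusion reads $u*(v*u) = u*v$ (the statement $x \in V_{u,y}$) together with $v*(u*v) = v*u$ (the statement $y \in V_{v,x}$). Thus, after eliminating the auxiliary letters $x$ and $y$, condition (b) is exactly: for all $u,v \in V$, if $v*u \neq u$ and $u*v \neq v$, then $u*(v*u) = u*v$ and $v*(u*v) = v*u$.

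Next I would record a symmetry of the axiom (t3). Writing (t3) as the assertion $P(u,v)$: ``$v*u \neq u$ implies $u*(v*u) = u*v$'', the swapped assertion $P(v,u)$ reads ``$u*v \neq v$ implies $v*(u*v) = v*u$''. Since (t3) quantifies over all ordered pairs, requiring $P(u,v)$ for all $u,v$ is the same as requiring $P(v,u)$ for all $u,v$. Hence, when $(V,*)$ is simple, both conclusions of (b) are available whenever their respective single hypotheses $v*u \neq u$ and $u*v \neq v$ hold. This immediately yields the implication (a) $\Rightarrow$ (b): given the four hypotheses of (b), apply $P(u,v)$ to obtain $u*(v*u) = u*v$ and apply $P(v,u)$ to obtain $v*(u*v) = v*u$.

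The remaining direction (b) $\Rightarrow$ (a) is where a single key observation enters. I would prove that in any travel groupoid, $v*u \neq u$ forces $u*v \neq v$. Indeed, axiom (t1) gives $(u*v)*u = u$; if we had $u*v = v$, this would read $v*u = u$, contradicting $v*u \neq u$. With this observation in hand, let $u,v$ satisfy the hypothesis $v*u \neq u$ of (t3). Then automatically $u*v \neq v$, so setting $x := v*u$ and $y := u*v$ we have $u \in V_{v,x}$, $v \in V_{u,y}$, $u \neq x$, and $v \neq y$. Condition (b) then gives $x \in V_{u,y}$, that is, $u*(v*u) = u*v$, which is precisely (t3). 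Therefore $(V,*)$ is simple.

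I expect the only subtle point to be the observation that $v*u \neq u$ already implies $u*v \neq v$: it is what reconciles the two-hypothesis form of (b) with the one-hypothesis form of (t3), and without it the converse direction would not close. Everything else is a mechanical translation between the set notation $V_{u,v}$ and the operation $*$, together with the relabelling symmetry of (t3).
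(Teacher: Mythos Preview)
Your proof is correct and follows essentially the same approach as the paper's own proof: both directions unwind the set notation into the operation $*$, use the symmetry of (t3) under swapping $u$ and $v$ for (a)$\Rightarrow$(b), and invoke the implication ``$v*u\neq u \Rightarrow u*v\neq v$'' for (b)$\Rightarrow$(a). The only difference is that the paper cites this last implication from Nebesk{\'y}'s original paper, whereas you derive it directly from (t1); your self-contained argument is a small improvement.
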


\begin{proof}
First, we show that (a) implies (b). 
Take any $u,v,x,y \in V$ 
such that 
$u \in V_{v,x}$, $v \in V_{u,y}$, $u \neq x$, $v \neq y$ 
Then $v*u = x \neq u$ and $u*v = y \neq v$. 
Since $(V,*)$ is simple by (a), 
$v*u \neq u$ implies $u * (v * u) = u * v$ 
and 
$u*v \neq v$ implies $v * (u * v) = v * u$. 
Therefore, we have 
$u*x=y$ and $v*y=x$. 
Thus 
$x \in V_{u,y}$ and $y \in V_{v,x}$.  

Second, we show that (b) implies (a). 
Take any $u,v \in V$ such that $v*u \neq u$. 
Note that $v*u \neq u$ implies $u*v \neq v$ (cf. \cite[Proposition 2 (1)]{TG2006}).  
Let $x:=v*u$. Then $u \in V_{v,x}$ and $u \neq x$. 
Let $y:=u*v$. Then $v \in V_{u,y}$ and $v \neq y$.  
By (b), we have $x \in V_{u,y}$ and $y \in V_{v,x}$ 
Therefore $u*x=y$, that is, $u*(v*u)=u*v$. 
Thus $(V,*)$ is simple. 
\end{proof}

\begin{Defi}
A T-partition system $(V_{u,v} \mid (u,v) \in V \times V)$ on a graph $G$ 
is said to be \emph{simple} 
if the following condition holds: 
\begin{itemize}
\item[{\rm (R3)}]
For $u,v,x,y \in V$, 
if $u \in V_{v,x}$, $v \in V_{u,y}$, $u \neq x$, and $v \neq y$, 
then $x \in V_{u,y}$ and $y \in V_{v,x}$. 
\qed
\end{itemize}
\end{Defi}

\begin{Lem}\label{lem:SiVPS2SiTG}
Let $G$ be a graph, and 
let $\mathcal{P}$
be a simple T-partition system on $G$. 
Then, the groupoid associated with $\mathcal{P}$ 
is a simple travel groupoid on $G$. 
\end{Lem}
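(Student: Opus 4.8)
The plan is to leverage the machinery already assembled in the paper, so that Lemma~\ref{lem:SiVPS2SiTG} reduces to a short bookkeeping argument rather than a computation from first principles. Let $\mathcal{P}=(V_{u,v} \mid (u,v) \in V \times V)$ be a simple T-partition system on $G$, and let $(V,*)$ be the groupoid associated with $\mathcal{P}$. By Lemma~\ref{lem:VPS2TG}, $(V,*)$ is already a travel groupoid on $G$; the only thing left to establish is that it is \emph{simple}, i.e.\ that it satisfies axiom (t3). So the entire proof is: start from the fact that $\mathcal{P}$ satisfies the simplicity condition (R3), and conclude that $(V,*)$ satisfies (t3).

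The key observation that makes this immediate is the Remark following the definition of the associated groupoid: the right translation system of $(V,*)$ is exactly $\mathcal{P}$. Hence the sets $V_{u,v}$ appearing in the defining condition (R3) for $\mathcal{P}$ coincide with the sets $V_{u,v}=\{w \in V \mid u*w=v\}$ used in Proposition~\ref{prop:simple-chara}. Therefore the statement ``(R3) holds for $\mathcal{P}$'' is literally the statement of condition (b) in Proposition~\ref{prop:simple-chara} applied to the travel groupoid $(V,*)$. Since $\mathcal{P}$ is simple by hypothesis, (R3) holds, so condition (b) holds for $(V,*)$, and Proposition~\ref{prop:simple-chara} (the implication (b)$\Rightarrow$(a)) then gives that $(V,*)$ is simple.

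First I would invoke Lemma~\ref{lem:VPS2TG} to record that $(V,*)$ is a travel groupoid on $G$, so that Proposition~\ref{prop:simple-chara} is applicable (that proposition is stated for travel groupoids). Next I would cite the Remark to identify the right translation system of $(V,*)$ with $\mathcal{P}$, which lets me equate the two notations for $V_{u,v}$. Then I would simply note that the simplicity of $\mathcal{P}$ is condition (b) of Proposition~\ref{prop:simple-chara} verbatim, and apply the (b)$\Rightarrow$(a) direction to conclude simplicity of $(V,*)$.

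I expect there to be essentially no obstacle here, since all the work has been front-loaded into Proposition~\ref{prop:simple-chara} and Lemma~\ref{lem:VPS2TG}. The only point requiring any care is the notational coherence: one must be sure that the $V_{u,v}$ in the hypothesis ``$\mathcal{P}$ is simple'' (defined as members of the system $\mathcal{P}$) and the $V_{u,v}$ in Proposition~\ref{prop:simple-chara} (defined via the operation $*$) really are the same sets. This is guaranteed precisely by the Remark identifying $\mathcal{P}$ with the right translation system of its associated groupoid, so the proof amounts to chaining Lemma~\ref{lem:VPS2TG}, that Remark, and Proposition~\ref{prop:simple-chara} together.
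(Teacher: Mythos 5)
Your proposal is correct and is exactly the paper's argument: the paper proves this lemma by citing Lemma~\ref{lem:VPS2TG} and Proposition~\ref{prop:simple-chara}, and you have simply spelled out the chaining (including the notational identification via the Remark) in more detail.
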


\begin{proof}
The lemma follows from Lemma \ref{lem:VPS2TG} and 
Proposition \ref{prop:simple-chara}. 
\end{proof}

\begin{Lem}\label{lem:SiTG2SiVPS}
Let $G$ be a graph, and 
let $(V,*)$ be a simple travel groupoid on $G$. 
Then, the right translation system 
of $(V,*)$ 
is a simple T-partition system on $G$. 
\end{Lem}

\begin{proof}
The lemma follows from Lemma \ref{lem:TG2VPS} 
and Proposition \ref{prop:simple-chara}. 
\end{proof}

\begin{Thm}\label{thm:chara-simple}
Let $G$ be a graph. 
Then, there exists a one-to-one correspondence between 
the set of all smooth travel groupoids on $G$ 
and the set of all smooth T-partition systems on $G$.  
\end{Thm}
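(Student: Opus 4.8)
The plan is to mirror, for the smoothness condition (t4), exactly the three-step template that the simple case follows earlier in this section. The backbone of the argument is already available: Theorem~\ref{thm:main1} supplies mutually inverse maps $\Phi$ (sending a T-partition system to its associated groupoid) and $\Psi$ (sending a travel groupoid to its right translation system), giving a bijection between the travel groupoids on $G$ and the T-partition systems on $G$. Consequently I would not construct a new correspondence from scratch; I would instead show that this established bijection restricts to the smooth subclasses on each side, and then invoke the fact that $\Phi$ and $\Psi$ are already inverse to each other.

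First I would translate the axiom (t4) into a property of the right translation system, just as (R3) was obtained from simplicity. Unwinding the dictionary $u*a=b \iff a \in V_{u,b}$, the hypothesis $u*v=u*w$ becomes the statement that $v$ and $w$ lie in a common block $V_{u,p}$, while the conclusion $u*(v*w)=u*v$ becomes the statement that the unique vertex $q$ with $w \in V_{v,q}$ also lies in $V_{u,p}$ (uniqueness of such a $q$ being guaranteed by (P0) and (P1c)). This motivates defining a smooth T-partition system by the property
\[
\text{(R4)}\quad
\text{if } v \in V_{u,p},\ w \in V_{u,p},\ \text{and } w \in V_{v,q},\ \text{then } q \in V_{u,p}
\quad(\text{for all } u,v,w,p,q \in V).
\]

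Second I would prove the analogue of Proposition~\ref{prop:simple-chara}: a travel groupoid $(V,*)$ satisfies (t4) if and only if its right translation system satisfies (R4). Both directions are a direct application of the dictionary above, reading $u*v=u*w=p$ as $v,w \in V_{u,p}$ and $v*w=q$ as $w \in V_{v,q}$. With this proposition in hand, the two directional lemmas become immediate corollaries, exactly as Lemmas~\ref{lem:SiVPS2SiTG} and~\ref{lem:SiTG2SiVPS} follow from Lemmas~\ref{lem:VPS2TG}, \ref{lem:TG2VPS}, and Proposition~\ref{prop:simple-chara}. Indeed, applying $\Phi$ to a smooth T-partition system yields a travel groupoid by Lemma~\ref{lem:VPS2TG}, whose right translation system equals the original system by the remark following the definition of the associated groupoid, hence satisfies (R4), hence is smooth; and applying $\Psi$ to a smooth travel groupoid yields a T-partition system by Lemma~\ref{lem:TG2VPS} whose right translation system satisfies (R4), hence is smooth.

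Finally I would conclude the theorem by restriction. The map $\Phi$ carries smooth T-partition systems into smooth travel groupoids and $\Psi$ carries smooth travel groupoids into smooth T-partition systems, and since $\Psi \circ \Phi = \mathrm{id}$ and $\Phi \circ \Psi = \mathrm{id}$ already hold on the full sets by Theorem~\ref{thm:main1}, these identities persist on the smooth subsets; hence the restrictions are mutually inverse bijections between the set of all smooth travel groupoids on $G$ and the set of all smooth T-partition systems on $G$. The only genuinely non-formal step is the characterization proposition of the second paragraph: the main obstacle will be getting the quantifier pattern of (R4) to match (t4) precisely, in particular handling the uniqueness of the vertex $q$ with $w \in V_{v,q}$. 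Once that translation is pinned down, every remaining step is a mechanical transcription of the simple case.
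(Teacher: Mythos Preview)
Your approach is correct and mirrors the paper's template exactly: translate the axiom into a condition on the right translation system, upgrade Lemmas~\ref{lem:VPS2TG} and~\ref{lem:TG2VPS} to the restricted class via that characterization, and then observe that the bijection of Theorem~\ref{thm:main1} restricts. One caveat worth flagging: the statement you were handed contains a typo. The theorem carries the label \texttt{thm:chara-simple}, sits in the \emph{simple} subsection, and the paper's own proof reads ``The theorem follows from Theorem~\ref{thm:main1} and Lemmas~\ref{lem:SiVPS2SiTG} and~\ref{lem:SiTG2SiVPS}'', i.e.\ it invokes the \emph{simple} lemmas; the word ``smooth'' in the statement should evidently be ``simple''. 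You took the wording at face value and argued the smooth case, thereby reproducing the content of the next subsection (Proposition~\ref{prop:smooth-chara}, the condition (R4), Lemmas~\ref{lem:SmVPS2SmTG} and~\ref{lem:SmTG2SmVPS}, Theorem~\ref{thm:chara-smooth}); your formulation of (R4) agrees with the one the paper actually uses in its proofs. Either way, the structure and substance of your argument match the paper's.
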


\begin{proof}
The theorem follows from Theorem \ref{thm:main1} 
and Lemmas \ref{lem:SiVPS2SiTG} and \ref{lem:SiTG2SiVPS}. 
\end{proof}

%%%%%%%%%%%%%%%%%%%%%%%%%%%%%%%%%%%%%%%%%%%%%%%%%%%%%%%%%%%%%%%%%%%%%%%%%%
\subsection{Smooth T-partition systems}
%%%%%%%%%%%%%%%%%%%%%%%%%%%%%%%%%%%%%%%%%%%%%%%%%%%%%%%%%%%%%%%%%%%%%%%%%%

\begin{Prop}\label{prop:smooth-chara}
Let $(V,*)$ be a travel groupoid. 
For $u,v \in V$, let 
\[
V_{u,v} := \{ w \in V \mid u * w = v \}. 
\]
Then, the following conditions are equivalent: 
\begin{itemize}
\item[{\rm (a)}]
$(V,*)$ is smooth; 
\item[{\rm (b)}]
For $u,v,x,y \in V$, 
if $x,y \in V_{u,v}$, then $x*y \in V_{u,v}$; 
\item[{\rm (c)}]
For $u,v,x,y,z \in V$, 
if $x, y \in V_{u,v}$ and $x \in V_{y,z}$, then $z \in V_{u,v}$. 
\end{itemize}
\end{Prop}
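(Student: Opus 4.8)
The plan is to prove Proposition~\ref{prop:smooth-chara} by establishing the cycle of implications (a)~$\Rightarrow$~(b)~$\Rightarrow$~(c)~$\Rightarrow$~(a), working throughout with the dictionary $u*w=v \iff w\in V_{u,v}$ that connects the groupoid operation to its right translation system. The first task is to translate the smoothness axiom~(t4) into the membership language of the sets $V_{u,v}$, so that each logical step becomes a statement about which set a product lands in.

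First I would show (a)~$\Rightarrow$~(b). Assume $(V,*)$ is smooth and take $x,y\in V_{u,v}$, so $u*x=v$ and $u*y=v$; thus $u*x=u*y$. By~(t4), $u*(x*y)=u*x=v$, which says exactly $x*y\in V_{u,v}$. This direction is essentially axiom~(t4) read off directly, so it should be routine. Next, (b)~$\Rightarrow$~(c): suppose $x,y\in V_{u,v}$ and $x\in V_{y,z}$, meaning $y*x=z$. By~(b), since $x,y\in V_{u,v}$, we get $x*y\in V_{u,v}$; but I actually want to reach $z=y*x\in V_{u,v}$, so I would apply~(b) with the roles of $x$ and $y$ swapped (the hypothesis $x,y\in V_{u,v}$ is symmetric in $x,y$), obtaining $y*x\in V_{u,v}$, i.e.\ $z\in V_{u,v}$.

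For the closing implication (c)~$\Rightarrow$~(a), I would start from the smoothness hypothesis $u*v=u*w$ and set $v' := u*v = u*w$, so that $v,w\in V_{u,v'}$. The goal is to derive the conclusion of~(t4), namely $u*(v*w)=u*v$, which in membership form reads $v*w\in V_{u,v'}$. To invoke~(c) I need to produce some $z$ with $z\in V_{u,v'}$ and an element $x$ playing the role in $V_{y,z}$; the natural choice is to take $x=w$, $y=v$, and $z=v*w$ (since $v*w$ is by definition the element with $w\in V_{v,\,v*w}$, i.e.\ $v\in V$ sends $w$ to $v*w$). Then $v,w\in V_{u,v'}$ and $w\in V_{v,\,v*w}$ match the hypotheses of~(c) with the pattern ``$x,y\in V_{u,v'}$ and $x\in V_{y,z}$'' after renaming, yielding $v*w\in V_{u,v'}$, which is precisely $u*(v*w)=u*v$.

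The main obstacle I anticipate is bookkeeping the index variables correctly in the (c)~$\Rightarrow$~(a) step: condition~(c) is stated with a fixed template $x,y\in V_{u,v}$, $x\in V_{y,z}$, and matching my derived memberships $v,w\in V_{u,v'}$ and $w\in V_{v,\,v*w}$ to that template requires assigning the roles carefully so that the ``$x$'' appearing in $V_{u,v'}$ is the same ``$x$'' appearing in $V_{y,z}$. Since both $v$ and $w$ lie in $V_{u,v'}$, the symmetry here gives me freedom to pick which one is $x$ and which is $y$, and I would choose $x=w$, $y=v$ so that the third membership $w=x\in V_{v,\,v*w}=V_{y,z}$ with $z=v*w$ lines up exactly. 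Once the correspondence of indices is pinned down, each implication is a one-line computation, and no appeal beyond the basic dictionary and the already-established Lemma~\ref{lem:s1} (via property~(R1), guaranteeing well-definedness) is needed.
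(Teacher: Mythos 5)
Your proof is correct and follows essentially the same route as the paper: the cycle (a) $\Rightarrow$ (b) $\Rightarrow$ (c) $\Rightarrow$ (a), with each implication a one-line translation via the dictionary $u*w=v \iff w\in V_{u,v}$. Your extra care in matching the indices of (c) --- reading $x\in V_{y,z}$ literally as $y*x=z$ and swapping the roles of $x$ and $y$ where needed --- is warranted, since the paper's own proof silently works with $y\in V_{x,z}$ instead; as the hypothesis $x,y\in V_{u,v}$ is symmetric in $x$ and $y$, the two readings are equivalent and your argument is sound.
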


\begin{proof}
First, we show that (a) implies (b). 
Take any $u,v,x,y \in V$ 
such that $x,y \in V_{u,v}$. 
Then $u*x=v$ and $u*y=v$, so $u*x=u*y$. 
Since $(V,*)$ is smooth by (a), 
$u*x=u*y$ implies $u*(x*y)=u*x=v$. 
Thus $x*y \in V_{u,v}$. 

Second, we show that (b) implies (c). 
Take any $u,v,x,y,z \in V$ such that 
$x,y \in V_{u,v}$ and $y \in V_{x,z}$. 
Then $x*y=z$. 
By (b), we have $x*y \in V_{u,v}$. 
Thus $z \in V_{u,v}$. 

Third, we show that (c) implies (a). 
Take any $u,x,y \in V$
such that $u*x=u*y$. 
Let $v:=u*x=u*y$. Then $x,y \in V_{u,v}$. 
Let $z:=x*y$. Then $y \in V_{x,z}$. 
By (c), we have $z \in V_{u,v}$. 
Therefore $u*z=v$, that is, $u*(x*y)=u*x$. 
Thus $(V,*)$ is smooth. 
\end{proof}

\begin{Defi}
A T-partition system $(V_{u,v} \mid (u,v) \in V \times V)$ on a graph $G$ 
is said to be \emph{smooth} 
if the following condition holds: 
\begin{itemize}
\item[{\rm (R4)}]
For $u,v,x,y,z \in V$, 
if $x, y \in V_{u,v}$ and $x \in V_{y,z}$, then $z \in V_{u,v}$. 
\qed
\end{itemize}
\end{Defi}

\begin{Lem}\label{lem:SmVPS2SmTG}
Let $G$ be a graph, and 
let $\mathcal{P}$
be a smooth T-partition system on $G$. 
Then, the groupoid associated with $\mathcal{P}$ 
is a smooth travel groupoid on $G$. 
\end{Lem}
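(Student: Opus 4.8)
The plan is to mirror exactly the strategy already established for the unqualified correspondence in Lemma \ref{lem:VPS2TG}, adding only the smoothness bookkeeping on top. Let $\mathcal{P}=(V_{u,v} \mid (u,v) \in V \times V)$ be a smooth T-partition system on $G$, and let $(V,*)$ denote the groupoid associated with $\mathcal{P}$. First I would invoke Lemma \ref{lem:VPS2TG} to conclude that $(V,*)$ is already a travel groupoid on $G$; this disposes of axioms (t1), (t2) and of the claim that the associated graph equals $G$, so nothing about the base correspondence needs to be reproven. All that remains is to upgrade ``travel groupoid on $G$'' to ``smooth travel groupoid on $G$'', i.e.\ to verify axiom (t4).

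The key observation is that the smoothness hypothesis on $\mathcal{P}$ is precisely condition (R4), and Proposition \ref{prop:smooth-chara} tells us that (R4) (its condition (c)) is equivalent to smoothness of the groupoid (its condition (a)) for any travel groupoid. So the second step is simply to apply Proposition \ref{prop:smooth-chara} to the travel groupoid $(V,*)$: since its right translation system coincides with $\mathcal{P}$ (by the Remark following the definition of the associated groupoid), and $\mathcal{P}$ satisfies (R4), condition (c) of the proposition holds for $(V,*)$, whence condition (a) holds, i.e.\ $(V,*)$ is smooth. Combining this with the fact that $(V,*)$ is a travel groupoid on $G$ yields the conclusion.

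There is really no substantive obstacle here, only one bookkeeping point that must be stated carefully: Proposition \ref{prop:smooth-chara} is phrased in terms of the sets $V_{u,v}:=\{w \mid u*w=v\}$ built \emph{from} the operation $*$, whereas in our situation the sets $V_{u,v}$ are given in advance as part of $\mathcal{P}$. The one thing to check is that these two descriptions of $V_{u,v}$ agree, which is exactly the content of the Remark asserting that the right translation system of the groupoid associated with $\mathcal{P}$ is $\mathcal{P}$ itself. Once that identification is flagged, the application of the proposition is immediate. Consequently the whole proof collapses to the two-line structure already used for the simple case (Lemma \ref{lem:SiVPS2SiTG}): cite Lemma \ref{lem:VPS2TG} for the travel-groupoid part and Proposition \ref{prop:smooth-chara} for the smoothness part. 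I expect the entire argument to read as ``The lemma follows from Lemma \ref{lem:VPS2TG} and Proposition \ref{prop:smooth-chara}.''
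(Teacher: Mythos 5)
Your proposal is correct and matches the paper's argument exactly: the paper's proof is the one-line citation of Lemma \ref{lem:VPS2TG} and Proposition \ref{prop:smooth-chara} that you predicted, and your expanded justification (identifying (R4) with condition (c) via the fact that the right translation system of the associated groupoid is $\mathcal{P}$ itself) correctly fills in the details the paper leaves implicit.
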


\begin{proof}
The lemma follows from Lemma \ref{lem:VPS2TG} 
and Proposition \ref{prop:smooth-chara}. 
\end{proof}

\begin{Lem}\label{lem:SmTG2SmVPS}
Let $G$ be a graph, and 
let $(V,*)$ be a smooth travel groupoid on $G$. 
Then, the right translation system 
of $(V,*)$ 
is a smooth T-partition system on $G$. 
\end{Lem}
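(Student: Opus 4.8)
The plan is to mirror the structure already established for the plain and simple cases, where the transfer of a property between a travel groupoid and its right translation system is reduced to a purely set-theoretic characterization proved once in a preceding proposition. Specifically, I would invoke Lemma~\ref{lem:TG2VPS} to know that the right translation system of $(V,*)$ is already a T-partition system on $G$, so all that remains is to verify the additional smoothness condition (R4). The bridge between the groupoid-level condition (t4) and the system-level condition (R4) is exactly Proposition~\ref{prop:smooth-chara}, whose equivalence of (a) and (c) was stated and proved precisely so that it could be reused here.

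First I would recall that, by definition, the right translation system $(V_{u,v} \mid (u,v) \in V \times V)$ of $(V,*)$ has $V_{u,v} = \{ w \in V \mid u * w = v \}$, which is the very set appearing in Proposition~\ref{prop:smooth-chara}. Since $(V,*)$ is assumed to be a smooth travel groupoid, condition (a) of that proposition holds, and hence by the equivalence (a) $\iff$ (c) so does condition (c): for any $u,v,x,y,z \in V$, if $x,y \in V_{u,v}$ and $x \in V_{y,z}$, then $z \in V_{u,v}$. But condition (c) of the proposition is word-for-word the condition (R4) defining a smooth T-partition system. Therefore the right translation system satisfies (R4), and combined with its being a T-partition system by Lemma~\ref{lem:TG2VPS}, it is a smooth T-partition system on $G$.

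I expect there to be essentially no obstacle here, since all the genuine work has been front-loaded into Proposition~\ref{prop:smooth-chara} and Lemma~\ref{lem:TG2VPS}. The only point requiring the slightest care is confirming that the $V_{u,v}$ defined locally in the proposition coincides with the $V_{u,v}$ of the right translation system, and that condition (c) matches (R4) variable-for-variable; both are immediate from the definitions. Thus the proof reduces to a single sentence citing the two earlier results, exactly parallel to the proofs of Lemmas~\ref{lem:SiVPS2SiTG} and \ref{lem:SiTG2SiVPS}.

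\begin{proof}
The lemma follows from Lemma \ref{lem:TG2VPS}
and Proposition \ref{prop:smooth-chara}.
\end{proof}
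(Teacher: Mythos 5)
Your proof is correct and is essentially identical to the paper's own proof, which likewise just cites Lemma~\ref{lem:TG2VPS} and Proposition~\ref{prop:smooth-chara}. The connecting reasoning you spell out (the equivalence (a)~$\iff$~(c) and the fact that (c) is verbatim (R4)) is exactly the intended justification.
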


\begin{proof}
The lemma follows from Lemma \ref{lem:TG2VPS} 
and Proposition \ref{prop:smooth-chara}. 
\end{proof}

\begin{Thm}\label{thm:chara-smooth}
Let $G$ be a graph. 
Then, there exists a one-to-one correspondence between 
the set of all smooth travel groupoids on $G$ 
and the set of all smooth T-partition systems on $G$.  
\end{Thm}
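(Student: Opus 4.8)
The plan is to observe that this theorem has exactly the same shape as Theorem~\ref{thm:chara-simple}, and to prove it by restricting the bijection already constructed in Theorem~\ref{thm:main1} to the smooth objects on both sides. Concretely, I would recall from the proof of Theorem~\ref{thm:main1} the two mutually inverse maps $\Phi \colon \mathrm{TPS}(G) \to \mathrm{TG}(G)$, sending a T-partition system to its associated groupoid, and $\Psi \colon \mathrm{TG}(G) \to \mathrm{TPS}(G)$, sending a travel groupoid to its right translation system. First I would fix notation $\mathrm{SmTG}(G)$ for the set of smooth travel groupoids on $G$ and $\mathrm{SmTPS}(G)$ for the set of smooth T-partition systems on $G$; these are subsets of $\mathrm{TG}(G)$ and $\mathrm{TPS}(G)$ respectively.

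The core of the argument is then just to check that $\Phi$ and $\Psi$ respect these subsets. By Lemma~\ref{lem:SmVPS2SmTG}, whenever $\mathcal{P} \in \mathrm{SmTPS}(G)$ the groupoid $\Phi(\mathcal{P})$ is smooth, so $\Phi$ restricts to a map $\mathrm{SmTPS}(G) \to \mathrm{SmTG}(G)$. Dually, by Lemma~\ref{lem:SmTG2SmVPS}, whenever $(V,*) \in \mathrm{SmTG}(G)$ the right translation system $\Psi((V,*))$ is smooth, so $\Psi$ restricts to a map $\mathrm{SmTG}(G) \to \mathrm{SmTPS}(G)$.

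Finally I would conclude by inheritance: since Theorem~\ref{thm:main1} gives $\Psi \circ \Phi = \mathrm{id}_{\mathrm{TPS}(G)}$ and $\Phi \circ \Psi = \mathrm{id}_{\mathrm{TG}(G)}$, these identities hold in particular on the smaller sets $\mathrm{SmTPS}(G)$ and $\mathrm{SmTG}(G)$. Hence the two restricted maps are mutually inverse, so $\Phi$ restricted to $\mathrm{SmTPS}(G)$ is the desired one-to-one correspondence onto $\mathrm{SmTG}(G)$. I do not expect a real obstacle here: all the substance is already packaged in Lemmas~\ref{lem:SmVPS2SmTG} and~\ref{lem:SmTG2SmVPS} (which in turn rest on the equivalence in Proposition~\ref{prop:smooth-chara}), and the only point to verify at this stage is that passing to smooth objects is well defined on both sides, which is exactly what those two lemmas supply. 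The mild ``care'' needed is simply to phrase the well-definedness of the restrictions before invoking mutual inverseness, so that the bijection statement is genuinely about the smooth subsets rather than the full sets.
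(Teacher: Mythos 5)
Your proposal is correct and matches the paper's own argument, which likewise derives the result from Theorem~\ref{thm:main1} together with Lemmas~\ref{lem:SmVPS2SmTG} and~\ref{lem:SmTG2SmVPS}; you have merely spelled out the routine restriction-and-inheritance details that the paper leaves implicit.
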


\begin{proof}
The theorem follows from Theorem \ref{thm:main1} 
and Lemmas \ref{lem:SmVPS2SmTG} and \ref{lem:SmTG2SmVPS}. 
\end{proof}

\begin{Rem}
Matsumoto and Mizusawa \cite{MM} gave an algorithmic way to construct 
a smooth travel groupoid on a finite connected graph. 
\qed 
\end{Rem}

%%%%%%%%%%%%%%%%%%%%%%%%%%%%%%%%%%%%%%%%%%%%%%%%%%%%%%%%%%%%%%%%%%%%%%%%%%
\subsection{Semi-smooth T-partition systems}
%%%%%%%%%%%%%%%%%%%%%%%%%%%%%%%%%%%%%%%%%%%%%%%%%%%%%%%%%%%%%%%%%%%%%%%%%%

\begin{Prop}\label{prop:semismooth-chara}
Let $(V,*)$ be a travel groupoid. 
For $u,v \in V$, let 
\[
V_{u,v} := \{ w \in V \mid u * w = v \}. 
\]
Then, the following conditions are equivalent: 
\begin{itemize}
\item[{\rm (a)}]
$(V,*)$ is semi-smooth; 
\item[{\rm (b)}]
For $u,v,x,y \in V$, 
if $x,y \in V_{u,v}$, then $x*y \in V_{u,v}$ or $x*^2y \in V_{u,v}$; 
\item[{\rm (c)}]
For $u,v,x,y,z,w \in V$, 
if $x, y \in V_{u,v}$ and $x \in V_{y,z} \cap V_{z,w}$, 
then $z \in V_{u,v}$ or $w \in V_{u,v}$, 
\end{itemize}
where $x*^2y := (x*y)*y$. 
\end{Prop}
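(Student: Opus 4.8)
The plan is to prove the cyclic chain of implications (a) $\Rightarrow$ (b) $\Rightarrow$ (c) $\Rightarrow$ (a), following exactly the template already used for the smooth case in Proposition \ref{prop:smooth-chara}. The only structural novelty is that each equality-type conclusion there is here replaced by a disjunction of two memberships, coming from the two alternatives in the semi-smoothness axiom (t5). Throughout I will translate between the groupoid and its right translation system via the dictionary $w \in V_{u,v} \iff u*w = v$, and I will use the reformulation of (t5) that reads: whenever $u*x = u*y$, one has $u*(x*y) = u*x$ or $u*(x*^2 y) = u*x$, where $x*^2 y = (x*y)*y$.

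For (a) $\Rightarrow$ (b), I would take $x,y \in V_{u,v}$, read off $u*x = v = u*y$, and feed $u*x = u*y$ into (t5); the two alternatives it returns are precisely $u*(x*y) = v$ and $u*(x*^2 y) = v$, i.e.\ $x*y \in V_{u,v}$ or $x*^2 y \in V_{u,v}$, which is (b).

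For (b) $\Rightarrow$ (c) and (c) $\Rightarrow$ (a), the one idea needed beyond unwinding definitions is that the hypothesis $x,y \in V_{u,v}$ is symmetric in $x$ and $y$, so I may interchange their roles when invoking the other condition. For (b) $\Rightarrow$ (c): from $x \in V_{y,z} \cap V_{z,w}$ I unwind $y*x = z$ and $z*x = w$, hence $z = y*x$ and $w = (y*x)*x = y*^2 x$; applying (b) to the pair $(y,x)$ then yields $y*x \in V_{u,v}$ or $y*^2 x \in V_{u,v}$, that is, $z \in V_{u,v}$ or $w \in V_{u,v}$. For (c) $\Rightarrow$ (a): beginning from $u*x = u*y$, I set $v := u*x$, $z := x*y$, and $w := (x*y)*y$, so that $x,y \in V_{u,v}$ and $y \in V_{x,z} \cap V_{z,w}$; invoking (c) with $x$ and $y$ interchanged gives $z \in V_{u,v}$ or $w \in V_{u,v}$, which unwinds to $u*(x*y) = u*x$ or $u*(x*^2 y) = u*x$, i.e.\ exactly (t5).

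I do not expect any deep obstacle; the work is entirely a matter of careful bookkeeping. The point to watch is that in each direction the $*^2$ operation must act on the correct argument — one sees $y*^2 x = (y*x)*x = w$ when passing to (c) and $x*^2 y = (x*y)*y = w$ when returning from (c) — and that each branch of the ``or'' in (t5), (b), and (c) is matched to the correct membership, $z \in V_{u,v}$ versus $w \in V_{u,v}$. Keeping the interchange of $x$ and $y$ consistent across these three implications is the only place where a careless slip would break the argument.
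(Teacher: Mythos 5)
Your proposal is correct and follows essentially the same route as the paper: the cyclic chain (a) $\Rightarrow$ (b) $\Rightarrow$ (c) $\Rightarrow$ (a) with exactly the same unwinding of memberships, the paper likewise silently interchanging the roles of $x$ and $y$ (justified by the symmetry of the hypothesis $x,y \in V_{u,v}$) in the steps (b) $\Rightarrow$ (c) and (c) $\Rightarrow$ (a). Your explicit attention to that interchange and to which argument $*^2$ acts on is exactly the right bookkeeping, and nothing further is needed.
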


\begin{proof}
First, we show that (a) implies (b). 
Take any $u,v,x,y \in V$ 
such that $x,y \in V_{u,v}$. 
Then $u*x=v$ and $u*y=v$, so $u*x=u*y$. 
Since $(V,*)$ is semi-smooth by (a), 
$u*x=u*y$ implies $u*(x*y)=u*x=v$ 
or $u*((x*y)*y)=u*x=v$ . 
Thus $x*y \in V_{u,v}$ 
or $x *^2 y = (x*y)*y \in V_{u,v}$. 

Second, we show that (b) implies (c). 
Take any $u,v,x,y,z,w \in V$ such that 
$x,y \in V_{u,v}$ and $y \in V_{x,z} \cap V_{z,w}$. 
Then $x*y=z$ and $z*y=w$. Therefore $w=(x*y)*y=x *^2 y$. 
By (b), we have $x*y \in V_{u,v}$ or $x *^2 y \in V_{u,v}$. 
Thus $z \in V_{u,v}$ or $w \in V_{u,v}$. 

Third, we show that (c) implies (a). 
Take any $u,x,y \in V$ 
such that $u*x=u*y$. 
Let $v:=u*x=u*y$. Then $x,y \in V_{u,v}$. 
Let $z:=x*y$ and $w:=z*y=(x*y)*y$. Then $y \in V_{x,z} \cap V_{z,w}$. 
By (c), we have $z \in V_{u,v}$ or $w \in V_{u,v}$. 
Therefore $u*z=v$ or $u*w=v$, 
that is, $u*(x*y)=u*x$ or $u*((x*y)*y)=u*x$. 
Thus $(V,*)$ is semi-smooth. 
\end{proof}

\begin{Defi}
A T-partition system $(V_{u,v} \mid (u,v) \in V \times V)$ on a graph $G$ 
is said to be \emph{semi-smooth} 
if the following condition holds: 
\begin{itemize}
\item[{\rm (R5)}]
For $u,v,x,y,z,w \in V$, 
if $x, y \in V_{u,v}$ and $x \in V_{y,z} \cap V_{z,w}$, 
then $z \in V_{u,v}$ or $w \in V_{u,v}$. 
\qed
\end{itemize}
\end{Defi}

\begin{Lem}\label{lem:SeSmVPS2SeSmTG}
Let $G$ be a graph, and 
let $\mathcal{P}$
be a semi-smooth T-partition system on $G$. 
Then, the groupoid associated with $\mathcal{P}$ 
is a semi-smooth travel groupoid on $G$. 
\end{Lem}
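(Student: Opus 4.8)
The plan is to mirror exactly the structure used in the proofs of Lemmas~\ref{lem:VPS2TG}, \ref{lem:SiVPS2SiTG}, and \ref{lem:SmVPS2SmTG}. Each of those establishes that a T-partition system with an extra closure property yields the corresponding specialized travel groupoid by combining the base result (Lemma~\ref{lem:VPS2TG}, which produces a travel groupoid on $G$) with the relevant equivalence proposition translating the groupoid-theoretic axiom into the set-theoretic condition. So here I would simply invoke Lemma~\ref{lem:VPS2TG} together with Proposition~\ref{prop:semismooth-chara}.

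\begin{proof}
The lemma follows from Lemma \ref{lem:VPS2TG}
and Proposition \ref{prop:semismooth-chara}.
\end{proof}

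To spell out why this suffices: let $\mathcal{P}=(V_{u,v} \mid (u,v) \in V \times V)$ be a semi-smooth T-partition system on $G$, and let $(V,*)$ be the associated groupoid. By Lemma~\ref{lem:VPS2TG}, $(V,*)$ is already a travel groupoid on $G$, so the only remaining task is to verify the semi-smoothness axiom (t5). The definition of a semi-smooth T-partition system is precisely the condition (R5), which is verbatim condition (c) of Proposition~\ref{prop:semismooth-chara}. Since $\mathcal{P}$ coincides with the right translation system of $(V,*)$ (as noted in the remark following the definition of the associated groupoid), the sets $V_{u,v}$ appearing in (R5) are exactly the sets $\{w \mid u*w = v\}$ to which Proposition~\ref{prop:semismooth-chara} applies. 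That proposition states that condition (c) holds if and only if $(V,*)$ is semi-smooth, so semi-smoothness of $\mathcal{P}$ forces semi-smoothness of $(V,*)$.

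The only genuine subtlety — and the step I would be most careful about — is checking that the identification of $\mathcal{P}$ with the right translation system of $(V,*)$ is legitimate, so that applying Proposition~\ref{prop:semismooth-chara} to $(V,*)$ really does test condition (R5) on $\mathcal{P}$ itself rather than on some a priori different system. This is handled by the remark immediately after the definition of the associated groupoid, which asserts exactly that the right translation system of the associated groupoid recovers $\mathcal{P}$. Given that, the argument is purely a matter of invoking two already-established results, exactly as in the simple and smooth cases, and no new computation is required.
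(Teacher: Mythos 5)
Your proposal is correct and coincides with the paper's own proof, which likewise consists of the single line invoking Lemma~\ref{lem:VPS2TG} and Proposition~\ref{prop:semismooth-chara}. Your added explanation --- that (R5) is condition (c) of the proposition and that the identification of $\mathcal{P}$ with the right translation system of the associated groupoid is what makes the application legitimate --- correctly spells out the details the paper leaves implicit.
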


\begin{proof}
The lemma follows from Lemma \ref{lem:VPS2TG} 
and Proposition \ref{prop:semismooth-chara}. 
\end{proof}

\begin{Lem}\label{lem:SeSmTG2SeSmVPS}
Let $G$ be a graph, and 
let $(V,*)$ be a semi-smooth travel groupoid on $G$. 
Then, the right translation system 
of $(V,*)$ 
is a semi-smooth T-partition system on $G$. 
\end{Lem}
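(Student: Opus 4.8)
The plan is to mirror the structure used for the three analogous ``travel groupoid $\Rightarrow$ system'' lemmas (Lemmas~\ref{lem:TG2VPS}, \ref{lem:SiTG2SiVPS}, and \ref{lem:SmTG2SmVPS}), namely to decompose the claim into two already-established pieces: the statement that the right translation system is a T-partition system, and the statement that it satisfies the extra defining condition (R5) for semi-smoothness. First I would invoke Lemma~\ref{lem:TG2VPS} applied to $(V,*)$, which is by hypothesis a travel groupoid on $G$; this immediately gives that the right translation system $(V_{u,v} \mid (u,v)\in V\times V)$ of $(V,*)$ is a T-partition system on $G$. Thus the entire burden reduces to checking the single remaining axiom (R5).

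The second and decisive step is to observe that (R5) is \emph{verbatim} condition (c) of Proposition~\ref{prop:semismooth-chara}. Here I would note the compatibility of notation: the sets $V_{u,v}=\{w\in V\mid u*w=v\}$ used in Proposition~\ref{prop:semismooth-chara} are exactly the members of the right translation system of $(V,*)$, so there is no ambiguity about which system (R5) is being tested against. Since $(V,*)$ is a semi-smooth travel groupoid, it satisfies condition (a) of Proposition~\ref{prop:semismooth-chara}, and the equivalence (a)$\Leftrightarrow$(c) proved there yields condition (c), which is precisely (R5).

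Combining the two steps, the right translation system of $(V,*)$ is a T-partition system satisfying (R5), hence a semi-smooth T-partition system on $G$, as required. I do not expect a genuine obstacle here, since all of the real content was packaged into Lemma~\ref{lem:TG2VPS} and into the equivalence proof of Proposition~\ref{prop:semismooth-chara}; the only point demanding any care is the bookkeeping check that the two occurrences of the symbol $V_{u,v}$ refer to the same objects, after which the proof collapses to a one-line citation of Lemma~\ref{lem:TG2VPS} and Proposition~\ref{prop:semismooth-chara}.
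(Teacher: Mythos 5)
Your proposal is correct and matches the paper's own argument, which likewise cites Lemma~\ref{lem:TG2VPS} for the T-partition system part and Proposition~\ref{prop:semismooth-chara} for condition (R5); you have simply spelled out the details the paper leaves implicit.
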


\begin{proof}
The lemma follows from Lemma \ref{lem:TG2VPS} 
and Proposition \ref{prop:semismooth-chara}. 
\end{proof}

\begin{Thm}\label{thm:chara-semismooth}
Let $G$ be a graph. 
Then, there exists a one-to-one correspondence between 
the set of all semi-smooth travel groupoids on $G$ 
and the set of all semi-smooth T-partition systems on $G$.  
\end{Thm}

\begin{proof}
The theorem follows from Theorem \ref{thm:main1} 
and Lemmas \ref{lem:SeSmVPS2SeSmTG} and \ref{lem:SeSmTG2SeSmVPS}. 
\end{proof}

%%%%%%%%%%%%%%%%%%%%%%%%%%%%%%%%%%%%%%%%%%%%%%%%%%%%%%%%%%%%%%%%%%%%%%%%%%%

\end{document}